\pgfplotsset{compat=1.18}
\newcommand{\mathdefault}[1][]{}{} 
\newcommand{\calA}{\mathcal A}
\newcommand{\calK}{\mathcal K}
\newcommand{\calO}{\mathcal O}
\newcommand{\R}{\mathbb R}
\newcommand{\C}{\mathbb C}
\newcommand{\vc}[1]{\mathbf{#1}}
\newcommand{\mrm}[1]{\mathrm{#1}}
\newtheorem{theorem}{Theorem}[section] 
\newtheorem{lemma}[theorem]{Lemma}     
\newtheorem{example}[theorem]{Example}
\title{A novel Krylov subspace method for approximating \\  Fréchet derivatives of large-scale matrix functions}
\author{Daniel Kressner\thanks{Institute of Mathematics, EPFL, Switzerland. {\tt oehme.pb@gmail.com, daniel.kressner@epfl.ch }} \and Peter Oehme\footnotemark[1]}
\begin{document}
    \maketitle
    
    \begin{abstract}
        We present a novel Krylov subspace method for approximating $L_f(A, E) \vc{b}$, the matrix-vector product of the Fréchet derivative $L_f(A, E)$ of a large-scale matrix function $f(A)$ in direction $E$, a
        task that arises naturally in the sensitivity analysis of quantities involving matrix functions, such as centrality measures for networks. 
        It also arises in the context of gradient-based methods for optimization problems that feature matrix functions, e.g., when fitting an evolution equation to an observed solution trajectory.
        In principle, the well-known identity
         \[
            f\left( \begin{bmatrix}
                A & E \\ 0 & A
            \end{bmatrix} \right) \begin{bmatrix}
                0 \\ \vc{b}
            \end{bmatrix} = \begin{bmatrix}
                L_f(A, E) \vc{b} \\ f(A) \vc{b}
            \end{bmatrix},
        \]
        allows one to directly apply any standard Krylov subspace method, such as the Arnoldi algorithm, to address this task.
        However, this comes with the major disadvantage that the involved block triangular matrix has unfavorable spectral properties, which impede the convergence analysis and, to a certain extent, also the observed convergence. To avoid these difficulties, we propose a novel modification of the Arnoldi algorithm that aims at better preserving the block triangular structure. In turn, this allows one to bound the convergence of the modified method by the best polynomial approximation of the derivative $f^\prime$ on the numerical range of $A$. Several numerical experiments illustrate our findings.
    \end{abstract}
   
    \section{Introduction} \label{sec:intro}

This work is concerned with matrix functions $f(A)$, the title motif of Higham's monumental monograph~\cite{Higham2008}, where $A \in \C^{n\times n}$ is a matrix and $f$ is a scalar function analytic in an open region containing the eigenvalues of $A$. Well-known examples include the matrix exponential, matrix square root, matrix logarithm, and the inverse of a matrix.
Consider the Fr\'echet derivative of $f$ at $A$, defined as the unique linear operator $L_f(A,\cdot): \C^{n\times n} \to \C^{n\times n}$ such that
\begin{equation} \label{eq:derdef}
f(A+ \varepsilon E) - f(A) = \varepsilon L_f(A,E) + \calO(\varepsilon^2)
\end{equation}
holds for every $E \in \C^{n\times n}$ and sufficiently small $\varepsilon > 0$. Such derivatives appear naturally in the sensitivity analysis of quantities involving matrix functions, such as centrality and communicability measures for networks~\cite{Benzi2020,delacruz2022,Pozza2018,Schweitzer2023}.
They also arise when using gradient-based methods for optimization problems that feature matrix functions; see~\cite{Didier2024,monti2025,Thanou2017,Zeilmann2023} for instances of this situation in different application fields.

For larger $n$, the sheer size of their $n^2 \times n^2$ matrix representation makes Fr\'echet derivatives much too expensive to compute and work with.
On the other hand, none of the above applications actually requires full access to this quantity. Typically, only the derivative evaluated in a specific direction $E$ applied to a vector $\vc{b} \in \C^n$ is needed:
\begin{equation} \label{eq:derivativeb}
 L_f(A,E) \cdot \vc{b},
\end{equation}
which coincides with the directional derivative of $f(A) \vc{b}$. Moreover, the matrices $A$ and $E$ usually feature some (data) sparsity that makes them cheap to multiply with vectors, prompting the use of Krylov subspace methods.
While the development and analysis of Krylov subspace methods for approximating large-scale matrix functions
$f(A) \vc{b}$ has been an active area of research during the last decades, significantly less is known about such methods for approximating~\eqref{eq:derivativeb}.

Before discussing numerical methods for~\eqref{eq:derivativeb}, it is useful to recall existing approaches for computing the whole $n\times n$ matrix $L_f(A,E)$. 
A common starting point of many methods is the identity
\begin{equation} \label{eq:blocktriangular}
 f(\calA) = \begin{bmatrix} f(A) & L_f(A,E) \\
          0 & f(A)´
         \end{bmatrix}
 \quad \text{with} \quad \calA = \begin{bmatrix} A & E \\
          0 & A
         \end{bmatrix},
\end{equation}
which is
attributed to Najfeld and Havel~\cite{Najfeld1995} in~\cite{Higham2008}. This allows one to straightforwardly apply existing robust methods for computing matrix functions, such as scaling-and-squaring for matrix exponentials~\cite{zbMATH05772935}, to $\calA$ in order to compute $L_f(A,E)$. However, it can be computationally beneficial to modify such a method by, e.g., exploiting the block triangular structure of $\calA$ or targeting $L_f(A,E)$ directly; see~\cite{Almohy2008,Almohy2013,Higham2013} for examples. Alternatively, the definition~\eqref{eq:derdef} can be used to proceed via  finite-difference approximations such as $L_f(A,E) \approx \big( f(A+ \varepsilon E) - f(A) \big) / \varepsilon$ for very small $\varepsilon$, but this limits the attainable accuracy due to numerical cancellation~\cite[P. 68]{Higham2008}. This effect can often be mitigated by using the 
complex step approximation $L_f(A,E) \approx \mathrm{Im}\big( f(A+ \mathrm{i} \varepsilon E) - f(A) \big) / \varepsilon$, see~\cite{zbMATH07363165}, but this introduces complex arithmetic for real data and does not easily generalize when $A$ itself is complex.

Each of the approaches above can be used to employ standard Krylov subspace methods, such as the Arnoldi method, for matrix functions to approximate $L_f(A,E)\vc{b}$:
\begin{description}
 \item[Arnoldi for $\calA$:]
 The identity~\eqref{eq:blocktriangular} implies
\begin{equation} \label{eq:blocktriangulartimesb}
 f(\calA) \begin{bmatrix} 
          0 \\ \vc{b}
         \end{bmatrix}= \begin{bmatrix} L_f(A,E) \vc{b} \\
          f(A) \vc{b}
         \end{bmatrix},
\end{equation}
which suggest to apply the Arnoldi method with the matrix $\calA$ and starting vector $\begin{bmatrix} 
          0 \\ \vc{b}
         \end{bmatrix}$ to compute $L_f(A,E) \vc{b}$ and $f(A) \vc{b}$ simultaneously;
see~\cite[Sec. 4.1]{delacruz2022} for an example of this approach. A major disadvantage is that $\calA$ typically has much less favorable spectral properties than $A$. For example, it simple to verify that $\calA$  generically (with respect to $E$) has Jordan blocks of size at least $2$, even when $A$ is symmetric! In turn, convergence bounds based on the numerical range, such as~\cite{zbMATH05815158}, can deteriorate significantly; see~\cite{Beckermann2018} for a related situation. Also one can no longer use the less expensive Lanczos method when $A$ is symmetric.
 \item[Finite differences with Arnoldi:]
 Given a finite difference approximation like \[L_f(A,E) \vc{b} \approx \big( f(A+ \varepsilon E) \vc{b} - f(A) \vc{b} \big) / \varepsilon\] for small $\varepsilon > 0$,
 one can apply the Arnoldi method to approximate each of the individual terms $f(A) \vc{b}$ and $f(A+ \varepsilon E) \vc{b}$, as suggested in~\cite[Sec. 4.1]{delacruz2022}.
 As discussed above, cancellation is again a concern. In order to attain an error tolerance $\mathsf{tol}$, one needs to choose $\varepsilon \lesssim \sqrt{\mathsf{tol}}$. Because of the appearance of $\varepsilon$ in the denominator, a simple application of the triangle inequality \emph{suggests} that each of the terms needs to be approximated with significantly higher accuracy, $\mathsf{tol}^{3/2}$ instead of $\mathsf{tol}$. In other words, convergence might be adversely affected by cancellation of the Arnoldi approximations.
 
 \item[Complex step with Arnoldi:] Combining the complex step approximation \[L_f(A,E) \vc{b} \approx  \mathrm{Im}\big( f(A+ \mathrm{i} \varepsilon E) \vc{b} - f(A) \vc{b} \big) / \varepsilon\] with Arnoldi comes (again) with the disadvantage that complex arithmetic is needed for real data. Also, the presence of $\varepsilon$ is again a concern; at least we are not aware of theoretical results ruling out the cancellation effect of the Arnoldi approximations mentioned above.
\end{description}

In this work, we propose and analyze a modification of the Arnoldi method for $\calA$ that better preserves the block triangular structure of $\calA$. This modification is inspired by existing Arnoldi methods for quadratic eigenvalue problems~\cite{zbMATH02206295}, and it allows us to mitigate the theoretical difficulties associated with the unfavorable spectral properties of $\calA$. In particular, the results of~\Cref{sec:convanalysis} bound the convergence of the modified Arnoldi method to $L_f(A,E) \vc{b}$ by the best polynomial approximation of $f^\prime$ on the numerical range of $A$ (instead of $\calA$). This parallels existing results~\cite{Beckermann2018,zbMATH01116284} for $f(A) \vc{b}$ that bound the convergence of Arnoldi by the best polynomial approximation of $f$ on the numerical range of $A$.

\begin{example} \label{ex:sqrtm}
To provide some first insight into the numerical performance of the different methods for approximating $L_f(A,E) \vc{b}$, we consider 
$f(z) = \sqrt{z}$, $A = \mathrm{diag}(1,\ldots,500)$, and random $E, \vc{b}$, generated using \texttt{randn} in Matlab. \Cref{fig:sqrtm} shows the error in the Euclidean norm
versus the number of steps of the Arnoldi methods used for the approximation. Interestingly, the difficulties with the convergence theory mentioned above for existing methods show up in a much less pronounced way than what one may expect. Still, our modified method converges fastest and at a rate comparable to the convergence of the Arnoldi method for approximating $f(A) \vc{b}$.
\end{example}

\begin{figure}
    \centering
    \resizebox{0.6\textwidth}{!}{\input{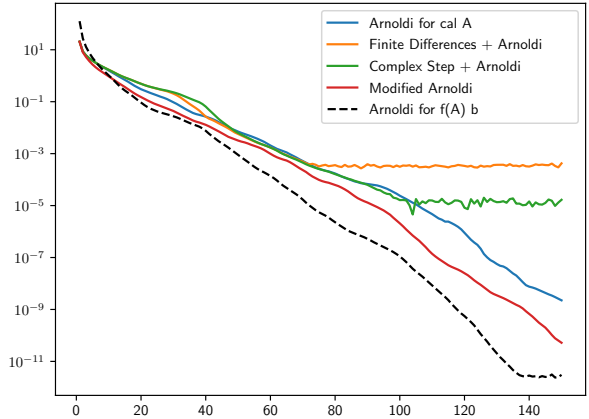}}
    \caption{\label{fig:sqrtm}Error vs. iterations of four different methods to approximate $L_f(A, E) \vc{b}$ for~\Cref{ex:sqrtm}. The dashed curve corresponds to the Arnoldi method for approximating $f(A) \vc{b}$ and is shown for reference only.}
\end{figure}

\subsection{Low-rank directions}

When the direction $E$ has rank one or, more generally, low rank, there are specialized methods for approximating the whole matrix $L_f(A, E)$ by a low-rank matrix. A (tensorized) Krylov subspace method has been proposed and analyzed in~\cite{zbMATH07478627,zbMATH07215841}.
The analysis in~\cite{Crouzeix2025,zbMATH07215841} relates the convergence of this method also to the best polynomial approximation of $f^\prime$ on the numerical range of $A$, analogous to our analysis in~\Cref{sec:convanalysis}.
The close connection between~\cite{zbMATH07478627,zbMATH07215841} and our work is also reflected by the following theorem, which shows that the computation of bilinear forms involving $L_f(A, E)$ can be effected by evaluating the Fr\'echet derivatives for rank-one directions. Note that $\langle \cdot, \cdot \rangle$ denotes the 
usual trace inner product defined by $\langle X, Y \rangle = \mathrm{trace}(X^* Y)$.

\begin{theorem} \label{thm:schweitzer}
Given $A \in \C^{n\times n}$ such that  $L_f(A, \cdot)$ is well defined, $\vc{b}, \vc{c} \in \C^n$ and $E \in \C^{n\times n}$, it holds that
 $\vc{c}^* L_f(A, E) \vc{b} = \langle L_f(A,\vc{c} \vc{b}^*), E \rangle$.
\end{theorem}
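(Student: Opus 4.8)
The plan is to first rewrite the left-hand side as a trace inner product and then recognize the claim as a self-adjointness-type statement for the Fréchet derivative operator. Since $\vc{c}^* L_f(A,E)\vc{b}$ is a scalar, cyclicity of the trace yields
\[
\vc{c}^* L_f(A,E)\vc{b} = \mathrm{trace}\big(\vc{b}\vc{c}^*\, L_f(A,E)\big) = \langle \vc{c}\vc{b}^*,\, L_f(A,E)\rangle,
\]
where I used $(\vc{c}\vc{b}^*)^* = \vc{b}\vc{c}^*$. Hence the assertion is equivalent to $\langle \vc{c}\vc{b}^*, L_f(A,E)\rangle = \langle L_f(A,\vc{c}\vc{b}^*), E\rangle$, that is, to the statement that the roles of the two rank-one arguments $\vc{c}\vc{b}^*$ and $E$ may be exchanged across the linear operator $L_f(A,\cdot)$ inside the trace inner product.

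Both sides of this reformulation are linear in $E$ and in $\vc{c}\vc{b}^*$, and they depend linearly and continuously on $f$ on any neighborhood of the spectrum of $A$. I would therefore reduce the problem to monomials $f(z) = z^k$, recovering the general analytic case afterwards by expanding $f$ in its power series (equivalently, by inserting the Cauchy integral representation of $L_f$) and passing to the limit. For a monomial the Fréchet derivative has the explicit form $L_{z^k}(A,E) = \sum_{i=0}^{k-1} A^{i} E A^{k-1-i}$, which I would substitute into both inner products. The left-hand side then becomes $\sum_{i=0}^{k-1}\mathrm{trace}\big(\vc{b}\vc{c}^*\, A^{i} E A^{k-1-i}\big)$ and the right-hand side $\sum_{i=0}^{k-1}\mathrm{trace}\big((A^{i}\vc{c}\vc{b}^* A^{k-1-i})^* E\big)$, after which the plan is to reorganize both sums by cyclicity of the trace and the reindexing $i \mapsto k-1-i$ so that they match term by term.

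The main obstacle, and the step requiring the most care, is the bookkeeping of the conjugate transpose introduced by the trace inner product: taking the adjoint of $A^{i}\vc{c}\vc{b}^* A^{k-1-i}$ moves each power of $A$ to the opposite side of the rank-one factor and turns it into a power of $A^*$, so that matching the two monomial expansions is in fact governed by the adjoint identity $L_f(A,\cdot)^\star = L_{\bar f}(A^*,\cdot)$ for the Fréchet derivative, with $\bar f(z) = \overline{f(\bar z)}$. Keeping this relation consistent with the reindexing is precisely where the argument must be executed carefully; once it is carried out for every monomial, linearity together with the density/limit argument extends the identity to all admissible $f$ and completes the proof.
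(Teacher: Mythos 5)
Your opening step coincides with the paper's: since $\vc{c}^* L_f(A,E)\vc{b}$ is a scalar, cyclicity of the trace gives $\vc{c}^* L_f(A,E)\vc{b} = \mathrm{trace}\big(\vc{b}\vc{c}^*\,L_f(A,E)\big) = \langle \vc{c}\vc{b}^*, L_f(A,E)\rangle$, reducing the claim to an adjointness property of $L_f(A,\cdot)$ in the trace inner product. Where you genuinely differ is in how that property is obtained: the paper simply cites it as well known (Higham, p.~66) and applies the defining property of the adjoint, while you prove it from first principles via the monomial formula $L_{z^k}(A,E)=\sum_{i=0}^{k-1}A^iEA^{k-1-i}$, cyclicity, and a power-series/Cauchy-integral limit. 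Your route is more self-contained, and in one respect more precise than the paper: you correctly record the adjoint as $L_f(A,\cdot)^\star = L_{\bar f}(A^*,\cdot)$ with $\bar f(z)=\overline{f(\bar z)}$, whereas the paper drops the conjugation of the coefficients (harmless for the real-coefficient functions $\exp$ and $\sqrt{\cdot}$ used later, but needed for general complex $f$).

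The one place you must be more decisive is your final sentence, which asserts that this ``completes the proof'' of the statement verbatim. Your own monomial bookkeeping shows otherwise: the computation delivers $\vc{c}^* L_f(A,E)\vc{b} = \langle L_{\bar f}(A^*, \vc{c}\vc{b}^*), E\rangle$, with matrix argument $A^*$, and no reindexing can turn this into the printed right-hand side $\langle L_f(A,\vc{c}\vc{b}^*),E\rangle$. Indeed, with plain $A$ the stated identity is false in general: for $f(z)=z^2$ the two sides are $\mathrm{trace}\big((\vc{b}\vc{c}^*A + A\vc{b}\vc{c}^*)E\big)$ and $\mathrm{trace}\big((\vc{b}\vc{c}^*A^* + A^*\vc{b}\vc{c}^*)E\big)$, which differ whenever $A\neq A^*$. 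This is a typo in the theorem statement rather than a flaw in your argument --- the paper's own proof chain likewise terminates in $\langle L_f(A^*,\vc{c}\vc{b}^*),E\rangle$, and the later sensitivity formulas use $L_{\exp}(A^T,\cdot)$, confirming that the $A^*$ (transposed) version is the intended one --- but a correct write-up must either state the corrected identity (with $A^*$ and, strictly, $\bar f$) or restrict to Hermitian $A$ and $\bar f = f$; hedging that the matching ``must be executed carefully'' leaves the contradiction between your adjoint identity and the verbatim claim unresolved.
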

\begin{proof}
It is well known that $L_f(A^*, \cdot)$ is the adjoint of the linear operator $L_f(A,\cdot): \C^{n\times n} \to \C^{n\times n}$ with respect to the trace inner product; e.g.,~\cite[P. 66]{Higham2008}. Using the defining property of an adjoint, this implies the result:
\begin{align*}
 \vc{c}^* L_f(A, E) \vc{b} = \mathrm{trace}(  \vc{b} \vc{c}^* L_f(A, E)  ) =  \langle \vc{c} \vc{b^*}, L_f(A, E) \rangle   
 = \langle L_f(A^*, \vc{c} \vc{b^*}), E \rangle.
\end{align*}
\end{proof}

For the special case of a rank-one matrix $E = \vc{x} \vc{y}^*$, the result of \Cref{thm:schweitzer} yields
\begin{equation} \label{eq:cortheorem}
  \vc{c}^* L_f(A, \vc{x} \vc{y}^*) \vc{b} = \vc{y}^* L_f(A,\vc{c} \vc{b}^*)^* \vc{x} = \overline{ \vc{x}^* L_f(A,\vc{c} \vc{b}^*) \vc{y} } 
\end{equation}
This matches the result of Theorem 2.3 in~\cite{Schweitzer2023}, which has been shown using vectorization and properties of Kronecker products.

    \section{Krylov subspace approximation to $L_f(A,E) \vc{b}$}

\subsection{Preliminaries}

We let $\calK_k(A, \vc{b}) = \mathrm{span}\{\vc{b}, A \vc{b}, \ldots, A^{k-1} \vc{b}\}$ denote the usual Krylov subspace and, to simply the discussion, we will always assume that it assumes maximal dimension $k$.
Let $U_k \in \R^{n\times k}$ contain an orthonormal basis of $\calK_k(A, \vc{b})$. Then the Arnoldi method for matrix functions returns the approximation
\begin{equation} \label{eq:fabapprox}
 f(A) \vc{b} \approx U_k f(U_k^T A U_k) U_k^T \vc{b}.
\end{equation}
One usually has $k \ll n$ and, thus, the evaluation of the $k\times k$ matrix function $f(U_k^T A U_k)$ is relatively cheap.
The convergence of~\eqref{eq:fabapprox} is closely related to the polynomial approximation of $f$. In fact, when $f$ itself is a polynomial of degree at most $k-1$ then the approximation~\eqref{eq:fabapprox} is exact~\cite{Ericsson1990, Saad1992}. The following lemma provides a slight variation of this polynomial exactness result, allowing the subspace to be larger than $\calK_k(A, \vc{b})$.
\begin{lemma}\label{lem:krylov-exactness}
    Let $p$ be a polynomial of degree at most $k - 1$, and let $U$ be an orthonormal basis of a subspace $\mathcal{U}$ such that $\mathcal{K}_k(A, \vc{b}) \subseteq \mathcal{U}$. Then it holds that
    \[
        p(A) \vc{b} = U p(U^* A U) U^* \vc{b}.
    \]
\end{lemma}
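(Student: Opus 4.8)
The plan is to prove the identity
\[
    p(A)\vc{b} = U\,p(U^* A U)\,U^*\vc{b}
\]
by induction on the degree of $p$, reducing everything to the single key fact that the Krylov condition $\calK_k(A,\vc b)\subseteq\mathcal U$ makes $U U^*$ act as the identity on the relevant Krylov vectors. Concretely, I would first record the base observation that, since $\vc b\in\calK_k(A,\vc b)\subseteq\mathcal U$ and $U$ is an orthonormal basis of $\mathcal U$, the orthogonal projector $P := U U^*$ onto $\mathcal U$ fixes $\vc b$, i.e.\ $P\vc b = \vc b$, and more generally $P\,A^j\vc b = A^j\vc b$ for all $0\le j\le k-1$. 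This is the heart of the argument and where the hypothesis is used.

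The cleanest route is to prove the \emph{power} version first: for every $j$ with $0\le j\le k-1$,
\[
    (U^* A U)^j\,U^*\vc b = U^* A^j \vc b,
\]
from which the lemma follows by linearity, since $p$ of degree at most $k-1$ is a linear combination of the monomials $z^j$ for $0\le j\le k-1$, and multiplying the displayed identity on the left by $U$ and using $U U^*A^j\vc b = A^j\vc b$ (the projection fact above) gives $U(U^*AU)^j U^*\vc b = U U^* A^j\vc b = A^j\vc b = A^j\vc b$, i.e.\ the monomial case $p(z)=z^j$. I would establish the displayed power identity by induction on $j$. The case $j=0$ is trivial. For the inductive step, assuming $(U^*AU)^{j}U^*\vc b = U^* A^{j}\vc b$ with $j\le k-2$, I compute
\[
    (U^*AU)^{j+1}U^*\vc b = (U^*AU)\,U^* A^{j}\vc b = U^* A\,(U U^*) A^{j}\vc b.
\]
Now the vector $A^{j}\vc b$ lies in $\calK_k(A,\vc b)\subseteq\mathcal U$ because $j\le k-2\le k-1$, so $U U^* A^{j}\vc b = A^{j}\vc b$, and the right-hand side collapses to $U^* A^{j+1}\vc b$, completing the induction.

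The step I expect to require the most care is the range restriction on the exponents: the projection identity $U U^* A^j\vc b = A^j\vc b$ is only valid for $A^j\vc b\in\mathcal U$, which the Krylov inclusion guarantees precisely for $0\le j\le k-1$. This is why the induction must stop at $j=k-1$ and why $p$ is assumed to have degree at most $k-1$; pushing to $j=k$ would involve $A^k\vc b$, which need not lie in $\mathcal U$, and the argument would break. Keeping the bookkeeping on the exponents tight is the only real obstacle, as everything else is linearity and the definition of the orthogonal projector $U U^*$ onto $\mathcal U$. Assembling these pieces yields, for general $p(z)=\sum_{j=0}^{k-1}c_j z^j$,
\[
    U\,p(U^*AU)\,U^*\vc b = \sum_{j=0}^{k-1} c_j\,U(U^*AU)^j U^*\vc b = \sum_{j=0}^{k-1} c_j\,A^j\vc b = p(A)\vc b,
\]
which is the claimed identity.
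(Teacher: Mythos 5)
Your proof is correct and follows essentially the same route as the paper's: both reduce to the monomials $z^j$, $0 \le j \le k-1$, and exploit the single key fact that the projector $UU^*$ fixes $A^i\vc{b}$ for $0 \le i \le k-1$ thanks to $\mathcal{K}_k(A,\vc{b}) \subseteq \mathcal{U}$; your explicit induction on $j$ is just the unrolled version of the paper's chained computation $(UU^*A)^j UU^*\vc{b} = A^j\vc{b}$, and your exponent bookkeeping ($j \le k-2$ in the inductive step) is exactly right.
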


\begin{proof}
    It suffices to show the claim for the monomials $x^j$, $j = 0, 1, \dots, k - 1$. We observe that
    \begin{equation} \label{eq:fact1}
             U {(U^* A U)}^j U^* \vc{b} = {(U U^* A)}^j U U^* \vc{b}.
    \end{equation}
    By assumption we know that $A^i \vc{b} \in \mathcal{U}$, whence $U U^* A^i \vc{b} = A^i \vc{b}$ for every 
    $0 \le i \le j$. In turn,
    \begin{align*}
     {(U U^* A)}^j U U^* \vc{b} & = {(U U^* A)}^j \vc{b} = {(U U^* A)}^{j-1} UU^* A \vc{b} = {(U U^* A)}^{j-1} A \vc{b} \\
     & = {(U U^* A)}^{j-2} A^2 \vc{b} = \cdots = A^j b.
    \end{align*}
Combined with~\eqref{eq:fact1}, this proves the claim $U {(U^* A U)}^j U^* \vc{b} = A^j \vc{b}$.
\end{proof}

\subsection{Modified Arnoldi algorithm: Basic version}

As explained in \Cref{sec:intro}, directly applying the approximation~\eqref{eq:fabapprox} to the block triangular embedding~\eqref{eq:blocktriangular} in order to approximate $L_f(A,E) \vc{b}$ comes with theoretical difficulties. To address this issue, we consider a basis $\mathcal V_k \in \C^{2n\times k}$ for the relevant Krylov subspace $\mathcal{K}_k\left( \mathcal{A}, \begin{bmatrix} 
    0 \\ \vc{b}
\end{bmatrix} \right)$ and tear it into two parts, letting $U_k \in \C^{n\times k}$ and $V_k \in \C^{n\times k}$ contain orthonormal bases for the top $n$ rows and bottom $n$ rows of $\mathcal V_k$, respectively. Trivially,
\[
\mathcal{K}_k\left( \mathcal{A}, \begin{bmatrix} 
    0 \\ \vc{b}
\end{bmatrix} \right) \subset \mathrm{span}(\mathcal{U}_k), \quad 
\text{with} \quad \mathcal{U}_k = \begin{bmatrix}
        U_k & 0 \\ 0 & V_k
    \end{bmatrix}.
\]
By \Cref{lem:krylov-exactness}, the corresponding approximation
\begin{equation} \label{eq:approxfk}
  \vc{f}_k = \mathcal{U}_k f(\mathcal{U}_k^* \calA \mathcal{U}_k) \mathcal{U}_k^* \begin{bmatrix}
        0 \\ \vc{b}
        \end{bmatrix}
\end{equation}
still satisfies polynomial exactness, and it comes with the advantage that the compressed matrix
\begin{equation} \label{eq:blockstructure}
    \mathcal{U}_k^* \mathcal{A} \mathcal{U}_k = \begin{bmatrix}
        U_k^* A U_k & U_k^* E V_k \\ 0 & V_k^* A V_k
    \end{bmatrix}
\end{equation}
preserves the block triangular structure of $\mathcal A$. By~\eqref{eq:blocktriangular}, the first $n$ 
entries of $\vc{f}_k$ approximate $L_f(A,E) \vc{b}$, while its bottom $n$ entries approximate $f(A)\vc{b}$.
\begin{algorithm}
    \caption{Modified Arnoldi Algorithm (Basic Version) \label{alg:mod-arnoldi}}
    \KwData{$A, E \in \mathbb{C}^{n \times n}, \vc{b} \in \mathbb{C}^n, k$}
    \KwResult{Approximation $\vc{f}_k \in \mathbb{C}^{2n}$ defined in~\eqref{eq:approxfk}}
    Compute a basis $\mathcal{V}_k$ of $\mathcal{K}_k\left( \calA, \begin{bmatrix}
        0 \\ \vc{b}
    \end{bmatrix} \right)$\;
    Split $\mathcal{V}_k = \begin{bmatrix}
        C \\ D
    \end{bmatrix}$ and orthogonalize $U_k = \mrm{orth}(C), V_k = \mrm{orth}(D)$\;\label{algline:naive-reorth}
    Form $\mathcal{U}_k = \begin{bmatrix}
        U_k & 0 \\ 0 & V_k
    \end{bmatrix}$ and compute $\vc{f}_k = \mathcal{U}_k f(\mathcal{U}_k^* \calA \mathcal{U}_k) \mathcal{U}_k^* \begin{bmatrix}
        0 \\ \vc{b}
    \end{bmatrix}$\;\label{algline:matfunc-proj}
\end{algorithm}

\Cref{alg:mod-arnoldi} summarizes the procedure described above. In this basic form, \Cref{alg:mod-arnoldi} clearly  suffers from inefficiencies because the (stable) computation of $\mathcal{V}_k$ already requires orthogonalization and then its bottom and top parts need to be orthogonalized again. Also, the explicit computation of the compressed matrix $\mathcal{U}_k^* \mathcal{A} \mathcal{U}_k$  
requires additional matrix-vector products with $A$ and $E$.

\subsection{Modified Arnoldi algorithm: Separate orthogonalization}

To reduce the expense of orthogonalization, we recursively construct (non-orthonormal) Krylov subspace bases of the form
\begin{equation} \label{eq:structbasis}
 \begin{bmatrix}
    U_i R_i \\ V_i
\end{bmatrix} \in \mathbb{C}^{2 n \times (i + 1)},
\end{equation}
such that $U_i \in \mathbb{C}^{n \times i}$, $V_i \in \mathbb{C}^{n \times (i + 1)}$ are orthonormal, and $R_i \in \mathbb{C}^{i \times (i + 1)}$
is strictly upper triangular.
In the following, we explain how the Arnoldi method applied to the Krylov subspace
$\mathcal{K}_k\left( \calA, \begin{bmatrix}
    0 \\ \vc{b}
\end{bmatrix} \right)$ is modified to yield bases of the form~\eqref{eq:structbasis} for $i = 0,1,\ldots,k$.
\begin{description}
    \item[Initialization:] For $i = 0$, the Krylov subspace is spanned by the vector $\begin{bmatrix}
        0 \\ \vc{b}
    \end{bmatrix}$ and a basis of the form~\eqref{eq:structbasis} is obtained by setting
    $V_0 = [ \vc{v}_0 ]$ with $\vc{v}_0 = \vc{b} / \Vert \vc{b} \Vert_2$. Note that $U_0 \in \C^{n\times 0}$ and $R_0 \in \C^{0\times 1}$ are empty matrices.
    \item[Iteration from $i-1$ to $i$:] Given a basis of the form~\eqref{eq:structbasis} for $i-1$, we partition its last vector into the two components $\begin{bmatrix}
        \vc{w}_{i-1} \\ \vc{v}_{i-1}
    \end{bmatrix}$. Note that $\vc{w}_{i-1}$ is not available explicitly, but needs to be extracted from the matrix product $U_{i-1} R_{i-1}$.
    Multiplying the last basis vector with $\mathcal{A}$,
    \begin{equation} \label{eq:matvecprod}
        \begin{bmatrix}
            A & E \\ 0 & A
        \end{bmatrix} \begin{bmatrix}
            \vc{w}_{i - 1} \\ \vc{v}_{i - 1}
        \end{bmatrix} = \begin{bmatrix}
            A \vc{w}_{i - 1} + E \vc{v}_{i - 1} \\ A \vc{v}_{i - 1}
        \end{bmatrix},
    \end{equation}
    and appending it to the current basis yields a basis 
    for the next Krylov subspace, provided that no breakdown in the Arnoldi process occurs.
    We now aim at transforming this new basis into the desired form
    \begin{equation} \label{eq:nextbasis}
     \begin{bmatrix}
      U_{i} R_i \\
      V_i
     \end{bmatrix} = 
    \begin{bmatrix}
        \begin{bmatrix}
            U_{i - 1} & \vc{u}_i
        \end{bmatrix} R_i \\
        \begin{bmatrix}
            V_{i - 1} & \vc{v}_i
        \end{bmatrix}
    \end{bmatrix}.
    \end{equation}
    One step of Gram-Schmidt is used to orthonormalize the bottom part of~\eqref{eq:matvecprod}, $\tilde{\vc{v}}_i = A \vc{v}_{i - 1}$, versus the existing basis $V_{i - 1}$: \[
\vc{h} = V_{i - 1}^* \tilde{\vc{v}}_i,\quad \tilde{\vc{v}}_i \gets \tilde{\vc{v}}_i - V_{i - 1} \vc{h}, \quad \beta = \Vert \tilde{\vc{v}}_i \Vert_2, \quad \vc{v}_i = \tilde{\vc{v}}_i / \beta. 
\]
This amounts to the transformation
\[
 \begin{bmatrix}
        V_{i - 1} & \vc{v}_i
    \end{bmatrix} = \begin{bmatrix}
            V_{i - 1} & A \vc{v}_{i - 1}
        \end{bmatrix}  \begin{bmatrix}
            \mathrm{id} & -\vc{h} / \beta \\ 0 & 1 / \beta
        \end{bmatrix}.
\]
In order to preserve the basis property for the Krylov subspace, we need to apply the same transformation to the top part, absorbing it into the triangular factor:
    \begin{equation*}
        \tilde{R}_i = \begin{bmatrix}
            R_{i - 1} & 0 \\ 0 & 1
        \end{bmatrix} \begin{bmatrix}
            \mathrm{id} & -\vc{h} / \beta \\ 0 & 1 / \beta
        \end{bmatrix} = \begin{bmatrix}
            R_{i - 1} & -R_{i - 1} \vc{h} / \beta \\ 0 & 1 / \beta
        \end{bmatrix}.
    \end{equation*}

    Similarly, we orthonormalize $\tilde{\vc{u}}_i = A \vc{w}_{i - 1} + E \vc{v}_{i - 1}$ versus $U_{i - 1}$ by computing
    \[ \vc{g} = U_{i - 1}^* \tilde{\vc{u}}_i,\quad  \tilde{\vc{u}}_i \gets \tilde{\vc{u}}_i - U_{i - 1} \vc{g} \quad \alpha = \Vert \tilde{\vc{u}}_i \Vert_2,\quad \vc{u}_i = \tilde{\vc{u}}_i / \alpha.\]
    This time, the corresponding transformation
     \[
 \begin{bmatrix}
        U_{i - 1} & \vc{u}_i
    \end{bmatrix} = \begin{bmatrix}
            U_{i - 1} & A \vc{w}_{i - 1} + E \vc{v}_{i - 1}
        \end{bmatrix}  \begin{bmatrix}
            \mathrm{id} & -\vc{g} / \alpha \\ 0 & 1 / \alpha
        \end{bmatrix}
\]
cannot be applied to the other part, because it would destroy the orthonormality of $V_i$. Instead, we compensate it by applying the inverse transformation to the triangular factor:
    \begin{equation*}
        R_i = \begin{bmatrix}
            \mathrm{id} & \vc{g} \\ 0 & \alpha
        \end{bmatrix} \tilde{R}_i = \begin{bmatrix}
            R_{i - 1} & -R_{i - 1} \vc{h} / \beta + \vc{g} / \beta \\ 0 & \alpha / \beta
        \end{bmatrix}.
    \end{equation*}
    In turn, we have constructed a Krylov subspace basis taking the form~\eqref{eq:nextbasis}. 
\end{description}
Algorithm~\ref{alg:orth} summarizes the described procedure. Note that for $i = 1$, line~\ref{algline:orth-u} simplifies to
$\tilde{\vc{u}}_1 = E \vc{v}_{0}$, $\vc{g}$ remains void, and line~\ref{algline:update} simplifies to
$R_1 = \begin{bmatrix}
        0 & \alpha / \beta
    \end{bmatrix}$.

\begin{algorithm} 
    \caption{Separate Orthonormalization}\label{alg:orth}
    \KwData{$A, E \in \mathbb{C}^{n \times n}, \vc{b} \in \mathbb{C}^n, k$}
    \KwResult{$U_k \in \mathbb{C}^{n \times k}, V_k \in \mathbb{C}^{n \times (k + 1)}, R_k \in \mathbb{C}^{k \times (k + 1)}$}
    Compute $\beta = \Vert \vc{b} \Vert_2, \vc{v}_0 = \vc{b} / \beta$\;
    Set $V_0 = [ \vc{v}_0], U_0 = [\,], R_0 = [\,]$\;
    \For{$i = 1, 2, 3, \dots, k$}{
        \tcp{Orthonormalization for $A \vc{v}_{i - 1}$}
        Compute $\tilde{\vc{v}}_i = A \vc{v}_{i - 1}$ and $\vc{h} = V_{i - 1}^* \tilde{\vc{v}}_i$\;
        Orthogonalize $\tilde{\vc{v}}_i \gets \tilde{\vc{v}}_i - V_{i - 1} \vc{h}$\;
        Set $\beta = \Vert \tilde{\vc{v}}_i \Vert_2, \vc{v}_i = \tilde{\vc{v}}_i / \beta, V_i = [ V_{i - 1}, \vc{v}_i ]$\;
        
        \tcp{Orthonormalization for $A U_{i - 1} R_{i - 1}[\colon, -1] + E \vc{v}_{i - 1}$}
        Compute $\tilde{\vc{u}}_i = A U_{i - 1} R_{i - 1}[\colon, -1] + E \vc{v}_{i - 1}$ and $\vc{g} = U_{i - 1}^* \tilde{\vc{u}}_i$\;\label{algline:orth-u}
        Orthogonalize $\tilde{\vc{u}}_i \gets \tilde{\vc{u}}_i - U_{i - 1} \vc{g}$\;
        Set $\alpha = \Vert \tilde{\vc{u}}_i \Vert_2, \vc{u}_i = \tilde{\vc{u}}_i / \alpha, U_i = [ U_{i - 1}, \vc{u}_i ]$\;
        Update $R_i = \begin{bmatrix}
            R_{i - 1} & -R_{i - 1} \vc{h} / \beta + \vc{g} / \alpha \\
            0 & \alpha / \beta
        \end{bmatrix}$\; \label{algline:update}
    }
\end{algorithm}

Note that Algorithm~\ref{alg:orth} already carries out all the matrix-vector products with $A$ and $E$ required for forming the compressed matrix 
$\mathcal{U}_k^* \mathcal{A} \mathcal{U}_k$; see~\eqref{eq:blockstructure}. We can thus avoid the need for extra large-scale 
matrix-vector products by letting Algorithm~\ref{alg:orth} store and return the matrices $A U_k, V_k^* A V_k,$ and $E V_k$. Compared to the naïve orthogonalization in Algorithm~\ref{alg:mod-arnoldi}, we have thus not only removed the additional orthogonalizations in line~\ref{algline:naive-reorth} but we have also avoided the block matrix multiplication in line~\ref{algline:matfunc-proj}. This mitigates the computational inefficiencies discussed above, at the expense of $\mathcal O(nk)$ additional memory for storing $A U_k$ and $E V_k$. 

    \subsection{Convergence analysis} \label{sec:convanalysis}

\Cref{lem:krylov-exactness} combined with the block structure preservation~\eqref{eq:blockstructure} allow us to analyze the convergence of \Cref{alg:mod-arnoldi} (with or without the separate orthogonalization performed by \Cref{alg:orth}). The following theorem is the main theoretical result of this work, relating convergence to the best polynomial approximation of the derivative of $f$ on the numerical range $W(A) = \{ x^* A x : \|x\|_2 = 1\}$ of $A$.

\begin{theorem} \label{thm:main}
Given an analytic function $f: \Omega \to \C$ for some domain $\Omega \in \C$ and $A \in \C^{n \times n}$, assume that $W(A) \subset \Omega$.
For some $E \in \C^{n\times n}$, $\vc{b} \in \C^n$, partition the output of \Cref{alg:mod-arnoldi} into $\vc{f}_k = \begin{bmatrix}
        \vc{v}_1 \\ \vc{v}_2
    \end{bmatrix}$ such that $\vc{v}_1, \vc{v}_2 \in \mathbb{C}^n$.
Letting $\Pi_{k-2}$ denote the set of all polynomials of degree at most $k - 2$, it holds that
    \[
        \Vert L_f(A, E) \vc{b} - \vc{v}_1 \Vert_2 \leq 2 C \cdot \Vert \vc{b} \Vert_2 \Vert E \Vert_F \cdot \inf_{q \in \Pi_{k-2}} \sup\limits_{z \in W(A)} \vert f^\prime(z) - q(z) \vert,
    \]
    with
    $C = 1$ if $A$ is normal and $C = (1 + \sqrt{2})^2$ otherwise.
\end{theorem}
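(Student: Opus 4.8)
The plan is to reduce the error to a best-polynomial-approximation quantity by combining the polynomial exactness of \Cref{lem:krylov-exactness} with the structure preservation~\eqref{eq:blockstructure} and linearity in $f$. First I would make the dependence of $\vc v_1$ on $f$ explicit. Writing $\mathcal{U}_k^* \calA \mathcal{U}_k = \begin{bmatrix} A_1 & E_1 \\ 0 & A_2 \end{bmatrix}$ with $A_1 = U_k^* A U_k$, $A_2 = V_k^* A V_k$, $E_1 = U_k^* E V_k$, the block triangular identity~\eqref{eq:blocktriangular} applied to this compressed matrix shows that $f(\mathcal{U}_k^*\calA\mathcal{U}_k)$ has an off-diagonal block $L^{(k)}_f$, a \emph{generalized Fréchet derivative}. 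Using $\mathcal{U}_k^*\begin{bmatrix}0\\\vc b\end{bmatrix} = \begin{bmatrix}0\\V_k^*\vc b\end{bmatrix}$ in~\eqref{eq:approxfk} then gives $\vc v_1 = U_k L^{(k)}_f V_k^* \vc b$. Both $L_f(A,E)$ and $L^{(k)}_f$ are off-diagonal blocks of $f$ evaluated at a block upper triangular matrix, and hence both depend linearly on $f$, as does $\vc v_1$.

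Second, I would split $f = p + r$ with $p$ a polynomial of degree at most $k-1$. Since $\calK_k\big(\calA,\begin{bmatrix}0\\\vc b\end{bmatrix}\big) \subseteq \mathrm{span}(\mathcal{U}_k)$, applying \Cref{lem:krylov-exactness} to $\calA$ with starting vector $\begin{bmatrix}0\\\vc b\end{bmatrix}$ reproduces the $p$-part exactly, so the top block of~\eqref{eq:approxfk} computed with $p$ equals $L_p(A,E)\vc b$. By linearity the polynomial contribution cancels and
\[
 L_f(A,E)\vc b - \vc v_1 = L_r(A,E)\vc b - U_k L^{(k)}_r V_k^* \vc b,
\]
where $L^{(k)}_r$ is the $r$-analogue of $L^{(k)}_f$; it then remains to bound the two remainder terms separately.

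The heart of the argument, and the step I expect to be the main obstacle, is a single bound on off-diagonal blocks: if $W(B), W(C)$ lie in a convex set $\mathcal W$ and $L$ is the off-diagonal block of $r\big(\begin{bmatrix} B & F \\ 0 & C\end{bmatrix}\big)$, then $\Vert L\Vert_F \le C\,\Vert F\Vert_F\,\sup_{z\in\mathcal W}\vert r'(z)\vert$. I would derive this from the double-contour representation $L = \frac{1}{(2\pi i)^2}\oint\oint r[s,t]\,(sI-B)^{-1}F(tI-C)^{-1}\,dt\,ds$, where $r[s,t] = (r(s)-r(t))/(s-t)$ is the divided difference (extended by $r'$ on the diagonal) and is analytic on a neighborhood of $\mathcal W\times\mathcal W$. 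Convexity of $\mathcal W$ yields the pointwise estimate $\vert r[s,t]\vert \le \sup_{\mathcal W}\vert r'\vert$ via $r[s,t] = \int_0^1 r'\big(t + \sigma(s-t)\big)\,d\sigma$. For normal $B, C$ one diagonalizes unitarily and reads off $\Vert L\Vert_F \le \max_{i,j}\vert r[\mu_i,\nu_j]\vert\,\Vert F\Vert_F$, giving $C = 1$; in general one controls each resolvent factor through the Crouzeix--Palencia inequality $\Vert g(B)\Vert_2 \le (1+\sqrt2)\sup_{W(B)}\vert g\vert$ applied in each variable, producing $C = (1+\sqrt2)^2$.

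Finally I would apply this bound twice: with $B=C=A$, $F=E$, $\mathcal W = W(A)$ for the first term, and with $B=A_1$, $C=A_2$, $F=E_1$ for the second. The key structural facts are that compression shrinks the numerical range, $W(A_1), W(A_2)\subseteq W(A)$, so the same convex set $\mathcal W = W(A)$ serves both terms, together with $\Vert E_1\Vert_F = \Vert U_k^* E V_k\Vert_F \le \Vert E\Vert_F$ and $\Vert V_k^*\vc b\Vert_2 \le \Vert \vc b\Vert_2$. Summing the two contributions produces the factor $2$, and since $r' = f' - p'$ with $p'$ ranging over all of $\Pi_{k-2}$ as $p$ ranges over $\Pi_{k-1}$, taking the infimum over $p$ yields the claimed estimate. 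The delicate points to verify carefully are the double-contour representation and its confluent (diagonal) limit, the numerical-range containment for the compressed blocks, the preservation of normality in the constant-$1$ case (which is transparent when $A$ is Hermitian, since then $A_1,A_2$ are Hermitian), and the faithful tracking of the Crouzeix--Palencia constant through both resolvent factors.
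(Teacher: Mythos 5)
Your architecture is exactly that of the paper's proof: subtract an arbitrary $p \in \Pi_{k-1}$ using the polynomial exactness of \Cref{lem:krylov-exactness}, so that with $e = f - p$ the error collapses to $L_e(A,E)\vc{b} - U_k E_{12} V_k^* \vc{b}$, where $E_{12}$ is the off-diagonal block of $e$ evaluated at the compressed block triangular matrix~\eqref{eq:blockstructure}; bound each block in Frobenius norm by $C \Vert E \Vert_F \sup_{z \in W(A)} \vert e'(z)\vert$, using $W(U_k^* A U_k), W(V_k^* A V_k) \subseteq W(A)$ and $\Vert U_k^* E V_k \Vert_F \le \Vert E \Vert_F$; sum to get the factor $2$ and set $q = p'$. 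The only divergence is that the paper imports the two block bounds as black boxes --- Corollary~5.1 of~\cite{Crouzeix2025} for $L_e(A,E)$ and Lemma~4.1 of~\cite{MR4257876} for $E_{12}$ --- whereas you attempt to reprove them from the double-contour divided-difference representation. That ambition is where your proposal has two genuine gaps.

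First, ``Crouzeix--Palencia applied in each variable'' is not a routine two-fold application: in the double-contour integral the variables $s,t$ run on contours \emph{outside} $W(A)$, so your pointwise bound $\vert e[s,t]\vert \le \sup_{W(A)}\vert e'\vert$ (valid on $W(A)\times W(A)$ by convexity) does not apply to the integrand, and estimating the resolvent factors separately along the contours diverges. Passing from the sup of the bivariate symbol on $W(B)\times W(C)$ to a bound on the induced operator in Frobenius norm is precisely the content of the cited bivariate extension of Crouzeix--Palencia (proved there via vectorization, viewing the double operator integral as the bivariate function applied to the commuting pair $I \otimes B$, $C^T \otimes I$); it is a theorem, not a corollary of the univariate inequality. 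Second, your constant-$1$ argument for normal $A$ only covers the first term: the compressions $U_k^* A U_k$ and $V_k^* A V_k$ are in general \emph{not} normal when $A$ is normal (Hermitian $A$ being the transparent exception, as you note), so the Daleckii--Krein diagonalization does not yield $C=1$ for the $E_{12}$ term --- a point you flag as delicate but do not resolve, and which your route cannot resolve by diagonalization alone. In short: structurally faithful to the paper and correct modulo the two imported inequalities; to be complete you should either cite them, as the paper does, or supply the bivariate argument in full rather than by analogy.
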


\begin{proof}
    For arbitrary $p \in \Pi_{k - 1}$, let us denote $\vc{p}_k = \mathcal{U}_k p(\mathcal{U}_k^* \mathcal{A} \mathcal{U}_k) \mathcal{U}_k^* \begin{bmatrix}
        0 \\ \vc{b}
    \end{bmatrix}$, where $\mathcal{U}_k$ is the block diagonal basis constructed by \Cref{alg:mod-arnoldi}.   Using Lemma~\ref{lem:krylov-exactness}, we may write
    \begin{equation} \label{eq:errorform}
        f(\mathcal{A}) \begin{bmatrix}
            0 \\ \vc{b}
        \end{bmatrix} - \vc{f}_k = e(\mathcal{A}) \begin{bmatrix}
            0 \\ \vc{b}
        \end{bmatrix} - \vc{e}_k,
    \end{equation}
    with $e = f - p$ and $\vc{e}_k = \vc{f}_k - \vc{p}_k$. Recalling~\eqref{eq:blocktriangular}, it holds that
    \[
        e(\mathcal{A}) = \begin{bmatrix}
            e(A) & L_e(A, E) \\ 0 & e(A)
        \end{bmatrix}.
    \]
    Corollary~5.1 in~\cite{Crouzeix2025} gives the upper bound
    \[
        \Vert L_e(A, E) \Vert_F \leq C \|E \|_F\cdot \sup\limits_{z \in W(A)} \vert e^\prime(z) \vert.
    \]
    To address the second term in~\eqref{eq:errorform}, we first note that
    \[
    e(\mathcal{U}_k^* \mathcal{A} \mathcal{U}_k) = 
e\left( \begin{bmatrix}
            U_k^* A U_k & U_k^* E V_k \\ 0 & V_k ^* A V_k
        \end{bmatrix} \right) = 
        \begin{bmatrix}
            e(U_k^* A U_k) & E_{12} \\ 0 & e(V_k ^* A V_k)
        \end{bmatrix}
    \]
    for some $E_{12} \in \C^{n\times n}$; see, e.g.,~\cite{Kenney1998} and~\cite{Higham2008}. Because the numerical ranges of $U_k^* A U_k$ and $V_k^* A V_k$ are both contained in $W(A)$, we can apply Lemma 4.1 from~\cite{MR4257876}, which gives
    \[
     \Vert E_{12} \Vert_F \leq C \|E \|_F\cdot \sup\limits_{z \in W(A)} \vert e^\prime(z) \vert.
    \]
    Inserting these relations into~\eqref{eq:errorform} gives 
    \[
    L_f(A, E) \vc{b} - \vc{v}_1 = L_e(A, E) \vc{b} - U_k E_{12} V_k^* \vc{b},
    \]
    and the bound
    \[
     \|L_f(A, E) \vc{b} - \vc{v}_1\|_2 \le  ( \|L_e(A, E)\|_F + \|E_{12}\|_F )  \|\vc{b}\|_2 \le 
     2 C \cdot \Vert \vc{b} \Vert_2 \Vert E \Vert_F \cdot \sup\limits_{z \in W(A)} \vert e^\prime(z) \vert.
    \]
    Noting that $e^\prime = f^\prime - q^\prime$ and setting $q = p^\prime$ concludes the proof.
\end{proof}

    \section{Numerical Experiments}

To test our newly proposed algorithm, all numerical experiments were run on a MacBook Air with 16 GB of RAM, an Apple M2 processor, and version 26.1 of macOS. The Python source code is publicly available at~\href{https://github.com/peoe/fAb-frechet}{github.com/peoe/fAb-frechet}.

\subsection{Centrality Measure Sensitivity in Network Analysis}

Let $A$ be the adjacency matrix of a graph. There are multiple centrality measures for individual graph nodes and edges, some of which can be expressed as matrix functions. We focus on the following functions:
\begin{description}
    \item[Total Network Communicability:] $f_{\mrm{TN}}(A) \coloneqq \vc{1}^T \exp{(A)} \vc{1}$
    \item[Subgraph Centrality:] $f_{\mrm{SC}}(A, \ell) \coloneqq \vc{e}_\ell^T \exp{(A)} \vc{e}_\ell$
    \item[Estrada Index:] $f_{\mrm{EI}}(A) \coloneqq \mrm{tr}(\exp{(A)})$
\end{description}
Most applications of these quantities focus on their sensitivities w.r.t.\ changes of the adjacency matrix $A$. Thus, one wishes to compute the Fréchet derivative and the subsequent sensitivity matrices $S^{(f)}$ as
\begin{align*}
    S^{(TN)}_{ij}(A) &\coloneqq \vc{1}^T L_{\exp}(A, \vc{e}_i \vc{e}_j^T) \vc{1}, \\
    S^{(SC)}_{ij}(A, \ell) &\coloneqq \vc{e}_\ell^T L_{\exp}(A, \vc{e}_i \vc{e}_j^T) \vc{e}_\ell, \\
    S^{(EI)}_{ij}(A) &\coloneqq \mrm{tr}(L_{\exp}(A, \vc{e}_i \vc{e}_j^T)).
\end{align*}
Applying the result~\eqref{eq:cortheorem}, which is a consequence of Theorem~\ref{thm:schweitzer} and~\cite[Corollary~4.5]{Schweitzer2023}, we can reduce these to
\begin{align*}
    S^{(TN)}_{ij}(A) &= \vc{e}_i^T L_{\exp}(A^T, \vc{1} \vc{1}^T) \vc{e}_j, \\
    S^{(SC)}_{ij}(A, \ell) &= \vc{e}_i^T L_{\exp}(A^T, \vc{e}_\ell \vc{e}_\ell^T) \vc{e}_j, \\
    S^{(EI)}_{ij}(A) &= \vc{e}_i^T L_{\exp}(A^T, \mrm{id}) \vc{e}_j.
\end{align*}

We will compute $S^{(TN)}_{ij}$ for the adjacency matrices from the following datasets:
\begin{description}
    \item[\texttt{Air500}~\cite{DynConnectLabData}:] The dataset from~\cite{Marcelino2012} contains the top $500$ airports based on passenger volume between July 2007 and June 2008 in the world as nodes. The $24009$ edges in this network represent flight connections, resulting in a directed unweighted graph.
    \item[\texttt{Autobahn}~\cite{DynConnectLabData}:] The dataset from~\cite{Kaiser2004} represents the German highway network in 2002. The graph has $1168$ vertices and represents the $1243$ connections between highway segments as $2486$ unweighted edges.
    \item[\texttt{Pajek/USPowerGrid}~\cite{SuiteSparseData}:] The dataset contains $4941$ nodes and $13188$ edges making up parts of the connections in the US power grid.
    \item[\texttt{SNAP/as-735}:] The dataset contains $7716$ nodes and $26467$ edges representing the autonomous systems of Internet router subgraphs from November 1997 to January 2000.
    \item[\texttt{SNAP/ca-HepTh}~\cite{SuiteSparseData}:] The dataset of arXiv collaborations in the High Energy Physics - Theory category from January 1993 to April 2003 contains $9877$ vertices and $51971$ edges.
\end{description}

To get a comparison between the methods of direct computation, low-rank updating from~\cite{Beckermann2018}, and our new method we only compute $\vc{1} L_{\exp}(A^T, E_{ij}) \vc{1}$ for a single pair of $i$ and $j$. These indices are chosen according to Table~\ref{tab:mat-inds}. Because the direction of change in the Fréchet derivative always has rank $1$, we can use the methods proposed in~\cite{delacruz2022}, \cite{Schweitzer2023}, the Arnoldi method for $\mathcal{A}$, and our modified Arnoldi algorithm combining Algorithms~\ref{alg:mod-arnoldi} and~\ref{alg:orth}. In Figures~\ref{fig:air-conv}, \ref{fig:autobahn-conv}, \ref{fig:uspower-conv}, \ref{fig:as-conv}, and \ref{fig:hepth-conv} we display the convergence of these four methods for all networks.

In addition to the convergence plots above we also compare the computational times for $k = 50$ steps of the four methods. These are only preliminary metrics, but they suggest that the additional costs from the computations are acceptable. In particular, the runtime of our algorithm matches that of the Arnoldi method for $\mathcal{A}$ closely. We list detailed runtimes in Table~\ref{tab:runtimes}.

\begin{table}
    \centering
    \begin{tabular}{ccc}
        \toprule
        Network & Row index $i$ & Column index $j$ \\
        \midrule
        \texttt{Air500} & 256 & 123 \\
        \texttt{Autobahn} & 218 & 605 \\
        \texttt{Pajek/USPowerGrid} & 3579 & 2400 \\
        \texttt{SNAP/as-735} & 3105 & 5000 \\
        \texttt{SNAP/ca-HepTh} & 7200 & 6969 \\
        \bottomrule
    \end{tabular}
    \caption{Row and column indices used in the computation of $S^{(TN)}_{ij}$.}
    \label{tab:mat-inds}
\end{table}

\begin{figure}
    \centering
    \begin{minipage}{.47\textwidth}
        \centering
        \resizebox{1\textwidth}{!}{\input{figs/network_conv_Air500.pgf}}
        \captionof{figure}{Convergence of $S^{(TN)}_{ij}$ for \texttt{Air500}}
        \label{fig:air-conv}
    \end{minipage}%
    \hfill%
    \begin{minipage}{.47\textwidth}
        \centering
        \resizebox{1\textwidth}{!}{\input{figs/network_conv_Autobahn.pgf}}
        \captionof{figure}{Convergence of $S^{(TN)}_{ij}$ for \texttt{Autobahn}}
        \label{fig:autobahn-conv}
    \end{minipage}
\end{figure}

\begin{figure}
    \centering
    \begin{minipage}{.47\textwidth}
        \centering
        \resizebox{1\textwidth}{!}{\input{figs/network_conv_USPowerGrid.pgf}}
        \captionof{figure}{Convergence of $S^{(TN)}_{ij}$ for \texttt{Pajek/USPowerGrid}}
        \label{fig:uspower-conv}
    \end{minipage}%
    \hfill%
    \begin{minipage}{.47\textwidth}
        \centering
        \resizebox{1\textwidth}{!}{\input{figs/network_conv_as-735.pgf}}
        \captionof{figure}{Convergence of $S^{(TN)}_{ij}$ for \texttt{SNAP/as-735}}
        \label{fig:as-conv}
    \end{minipage}
\end{figure}

\begin{figure}
    \centering
    \resizebox{0.47\textwidth}{!}{\input{figs/network_conv_HepTh.pgf}}
    \caption{Convergence of $S^{(TN)}_{ij}$ for \texttt{SNAP/ca-HepTh}}
    \label{fig:hepth-conv}
\end{figure}


\begin{table}[!ht]
    \centering
    \footnotesize
    \pgfplotstabletypeset[
        col sep=comma,
        header=true,
        display columns/0/.style={
            string type,
            column name={Network},
            postproc cell content/.style={@cell content={\texttt{##1}}}
        },
        display columns/1/.style={
            fixed,
            precision=5,
            column name={Arnoldi for $\mathcal{A}$}
        },
        display columns/2/.style={
            fixed,
            precision=5,
            column name={Noschese}
        },
        display columns/3/.style={
            fixed,
            precision=5,
            column name={Schweitzer}
        },
        display columns/4/.style={
            fixed,
            precision=5,
            column name={Modified Arnoldi}
        },
        every head row/.style={
            before row=\toprule,
            after row=\midrule
        },
        every last row/.style={
            after row=\bottomrule
        },
    ]{figs/runtimes.csv}
    \caption{Runtimes in seconds of the four methods computing $S^{(TN)}_{ij}$ for the different networks with $k = 50$ steps.}
    \label{tab:runtimes}
\end{table}

To demonstrate the capability of our algorithm in a setting where the direction $E$ of the Fréchet derivative $L_{\exp}(A, E)$ is not low-rank we take the quantity $\vc{1}^T L_{\exp}(A, \vc{e}_i \vc{e}_j^T) \vc{1}$ and instead compute $S^{(FR-TN)}_{ij} = \vc{1}^T L_{\exp}(A, E) \vc{1}$. The matrix $E$ is chosen such that $E_{ij} = 1$ if $A_{ij} \neq 0$, which generally results in $E$ not low-rank. In this case the methods of~\cite{delacruz2022} and~\cite{Schweitzer2023} do not apply, and thus we only show the convergence of the Arnoldi method for $\mathcal{A}$ and our modified Arnoldi algorithm combining Algorithms~\ref{alg:mod-arnoldi} and~\ref{alg:orth} in Figures~\ref{fig:ei-air-conv}, \ref{fig:ei-autobahn-conv}, \ref{fig:ei-uspower-conv}, \ref{fig:ei-as-conv}, and \ref{fig:ei-hepth-conv}.

\begin{figure}
    \centering
    \begin{minipage}{.47\textwidth}
        \centering
        \resizebox{1\textwidth}{!}{\input{figs/network_conv_ei_Air500.pgf}}
        \captionof{figure}{Convergence of $S^{(FR-TN)}_{ij}$ for \texttt{Air500}}
        \label{fig:ei-air-conv}
    \end{minipage}%
    \hfill%
    \begin{minipage}{.47\textwidth}
        \centering
        \resizebox{1\textwidth}{!}{\input{figs/network_conv_ei_Autobahn.pgf}}
        \captionof{figure}{Convergence of $S^{(FR-TN)}_{ij}$ for \texttt{Autobahn}}
        \label{fig:ei-autobahn-conv}
    \end{minipage}
\end{figure}

\begin{figure}
    \centering
    \begin{minipage}{.47\textwidth}
        \centering
        \resizebox{1\textwidth}{!}{\input{figs/network_conv_ei_USPowerGrid.pgf}}
        \captionof{figure}{Convergence of $S^{(FR-TN)}_{ij}$ for \texttt{Pajek/USPowerGrid}}
        \label{fig:ei-uspower-conv}
    \end{minipage}%
    \hfill%
    \begin{minipage}{.47\textwidth}
        \centering
        \resizebox{1\textwidth}{!}{\input{figs/network_conv_ei_as-735.pgf}}
        \captionof{figure}{Convergence of $S^{(FR-TN)}_{ij}$ for \texttt{SNAP/as-735}}
        \label{fig:ei-as-conv}
    \end{minipage}
\end{figure}

\begin{figure}
    \centering
    \resizebox{0.47\textwidth}{!}{\input{figs/network_conv_ei_HepTh.pgf}}
    \caption{Convergence of $S^{(FR-TN)}_{ij}$ for \texttt{SNAP/ca-HepTh}}
    \label{fig:ei-hepth-conv}
\end{figure}

\subsection{Parameter Fitting for the 2D Parametric Heat Equation}

We demonstrate the application of Fréchet derivatives in the setting of parameter fitting. The experiment we conduct here serves as a simplified version of the more advanced examples used in machine learning, which would go beyond the scope of this paper.

Let $\Omega = {[-1, 1]}^2$ be a 2D domain and $T = 1$ the final time. The parametric heat equation with homogeneous Dirichlet boundary conditions is given by
\begin{equation}\label{eq:param-heat-eq}
    \begin{aligned}
        \partial_t u(t, x, y) &= \sigma \Delta u(t, x, y),& &\forall t \in [0, T], x, y \in \Omega \\
        u(t, x, y) &= 0,& &\forall t \in [0, T], x, y \in \partial \Omega, \\
        u(0, x, y) &= u_0(x, y),& &\forall x, y \in \Omega,
    \end{aligned}
\end{equation}
where $\sigma$ is a scalar parameter determining the thermal conductivity of the domain $\Omega$.

We discretize~\eqref{eq:param-heat-eq} on an equispaced $75 \times 75$ grid using second order central finite differences, resulting in a system matrix $A(\sigma)$ of size $5625 \times 5625$. The solution of the discretized equation~\eqref{eq:param-heat-eq} is given by the solution operator $\vc{s}(\sigma) \coloneqq \exp{(T A(\sigma))} \vc{u}_0$. This system serves as a simple model in parameter fitting where the objective is to find a parameter value $\sigma^*$ which, given a reference solution $\vc{s}_\text{ref}$ minimizes
\[
    f(\sigma) = \Vert \vc{s}(\sigma) - \vc{s}_\text{ref} \Vert_2^2.
\]
To solve this problem we employ a gradient descent method, requiring the evaluation of $f(\sigma_k)$ and $f'(\sigma_k)$ for multiple parameter values $\sigma_1, \sigma_2, \dots$ This is expensive because every evaluation of $f$ requires the computation of the matrix function $\exp{(T A(\sigma)} \vc{u}_0$, and approximations of $f'$ via techniques such as finite differences require additional evaluations of $f$. Instead, we will use Algorithm~\ref{alg:mod-arnoldi} to compute the matrix function action $\exp{(T A(\sigma_k))} \vc{u}_0$ and its Fréchet derivative action $L_{\exp}(T A(\sigma_k), \partial_\sigma A(\sigma_k) \vc{u}_0$ at the same time for a right-hand side.

In Figure~\ref{fig:param-fitting-conv} we plot the value of $f(\sigma)$ over the trajectory of $\sigma$ computed in the gradient descent algorithm. We used an initial step size of $1/2$ and an absolute stopping tolerance of $10^{-8}$. In Figure~\ref{fig:param-traj} we additionally show the trajectory of the parameter $\sigma$ from the initial guess $\sigma = 1$ to the optimal value of $\sigma = 0.85$.

\begin{figure}
    \centering
    \begin{minipage}{.47\textwidth}
        \centering
        \resizebox{1\textwidth}{!}{\input{figs/heat_conv.pgf}}
        \captionof{figure}{Convergence of the function value $f(\sigma)$ during the gradient descent}
        \label{fig:param-fitting-conv}
    \end{minipage}%
    \hfill%
    \begin{minipage}{.47\textwidth}
        \centering
        \resizebox{1\textwidth}{!}{\input{figs/heat_param.pgf}}
        \captionof{figure}{Convergence of the parameter value $\sigma$ during the gradient descent}
        \label{fig:param-traj}
    \end{minipage}
\end{figure}

    \bibliographystyle{plain}
    \bibliography{lit}
\end{document}